\newtheorem{theorem}{Theorem}[section]
\newtheorem{lemma}[theorem]{Lemma}
\newtheorem{proposition}[theorem]{Proposition}
\newtheorem*{theorem*}{Theorem}
\newtheorem{predefinition}[theorem]{Definition}
\newenvironment{definition}{\begin{predefinition}\rm}{\end{predefinition}}
\newtheorem{preremark}[theorem]{Remark}
\newenvironment{remark}{\begin{preremark}\rm}{\end{preremark}}
\newtheorem{prenotation}[theorem]{Notation}
\newtheorem{preexample}[theorem]{Example}
\newtheorem{preclaim}[theorem]{Claim}
\newtheorem{prequestion}[theorem]{Question}
\newtheorem{preapplication}[theorem]{Application}
\numberwithin{equation}{section}
\newcommand \ZZ {{\mathbb Z}}
\newcommand  \FF {{\mathbb F}}
\newcommand \PP {{\mathbb P}^1}
\newcommand \CC {{\mathbb C}}
\title{Vanishing of Dirichlet L-functions at the central point over function fields}
\date{}
\author{Ravi Donepudi}
\address{Department of Mathematics, University of Illinois at Urbana-Champaign, Urbana, IL 61801, USA}
\email{donepud2@illinois.edu}
\author{Wanlin Li}
\address{Department of Mathematics, Massachusetts Institute of Technology,
Cambridge, MA 02139, USA}
\email{wanlinli@mit.edu}
\begin{document}

\maketitle

\begin{abstract}
We give a geometric criterion for Dirichlet $L$-functions associated to cyclic characters over the rational function field $\mathbb{F}_q(t)$ to vanish at the central point $s=1/2$. The idea is based on the observation that vanishing at the central point can be interpreted as the existence of a map from the projective curve associated to the character to some abelian variety over $\mathbb{F}_q$. Using this geometric criterion, we obtain a lower bound on the number of cubic characters over $\mathbb{F}_q(t)$ whose $L$-functions vanish at the central point where $q=p^{4n}$ for any rational prime $p \equiv 2 \bmod 3$. We also use recent results about the existence of supersingular superelliptic curves to deduce consequences for the $L$-functions of Dirichlet characters of other orders.
\end{abstract}

\section{Introduction}

Let $L(s,\chi)$ be the $L$-function attached to a Dirichlet character $\chi$. The behavior of these $L$-functions in the critical strip $\{s\in \CC|0<\Re(s)<1 \}$, specifically the locations of the zeroes of $L(s,\chi)$, has been a subject of intense study in number theory. In particular, under the Generalized Riemann Hypothesis (GRH), it is expected that the only points in the critical strip where $L(s,\chi)$ vanishes are on the vertical line $\Re (s) = \frac{1}{2}$. 

In a different direction, a conjecture of Chowla \cite{Chowla} predicts that the $L$ function $L(s,\chi)$ associated to a quadratic character $\chi$ does \emph{not} vanish for any real $s\in (0,1)$, and in particular $L(\frac{1}{2},\chi)\neq 0$ for any $\chi$. More generally, it is expected that no Dirichlet $L$-function of any order vanishes at $s=\frac{1}{2}$. In this classical setting, no counterexamples to Chowla's conjecture have been found. Results of Soundararajan \cite{Sound00} and Baluyot-Pratt \cite{BP18} establish lower bounds on the number of Dirichlet characters (ordered by conductor) that do not vanish at $s=\frac{1}{2}$. This supports the above expectation. In this article, we study the analog of this conjecture over function fields in positive characteristic. 

The arithmetic of rational functions in one variable over a finite field bears many similarities to the theory of number fields as they are both instances of the general theory of global fields. So analogs of virtually every phenomenon of the latter setting to the former have been investigated. Specifically, the above questions regarding vanishing of $L$-functions can be asked with replacing $\mathbb Q$ by $\mathbb F_q(t)$, the field of rational functions over the finite field $\mathbb F_q$. While GRH is still open in the classical case of Dirichlet $L$-functions, it (and much more) is proven in the case of $L$-functions attached to characters over function fields as part of the Weil conjectures, which are now theorems.

Over function fields in positive characteristic, $L$-functions attached to quadratic Dirichlet characters are exactly the $L$-functions attached to hyperelliptic curves. In \cite{BF} by Bui--Florea, the authors investigate the analog of Chowla's conjecture over function fields and prove that over a finite field of odd cardinality, the proportion of hyperelliptic $L$-functions that vanish at $s=\frac{1}{2}$ is at most $0.057$. In \cite{Wanlin}, the second author showed that the analog of Chowla's conjecture in function fields does not hold and gave an explicit lower bound on the number of counter examples with bounded genus. Even though these results show that there are infinitely many hyperelliptic $L$-functions that vanish at $s=\frac{1}{2}$, it is believed that $100\%$ of hyperelliptic $L$-functions do not vanish at $s=\frac{1}{2}$ in a sense that will be made precise in Section $2$. The work of Ellenberg--Li--Shusterman \cite{ELS} provides evidence supporting this belief. Extending this work beyond hyperelliptic $L$-functions is the recent work of David-- Florea--Lal\'in \cite{DaFloLa2}, where the authors show a positive proportion of L-functions associated to cubic characters do not vanish at the critical point $s=1/2$ based on their previous work \cite{DaFloLa} in which they study moments of the central value of cubic L-functions. 

The strategy used in \cite{Wanlin} stems from the observation that the existence of a quadratic Dirichlet character over $\mathbb{F}_q(t)$ whose $L$-function vanishes at the central point $s=\frac{1}{2}$ is equivalent to the existence of a hyperelliptic curve over the finite field $\mathbb{F}_q$ which admits $\sqrt{q}$ as an eigenvalue for the Frobenius action on the $\ell$-adic Tate module of its Jacobian. Moreover, all hyperelliptic curves which admit a dominant map to this curve induce a quadratic character whose $L$-function also vanishes at $s=\frac{1}{2}$. Thus, a large part of the work is to prove a lower bound on the number of curves in certain families which admit dominant maps to a fixed curve.

While the above article only deals with hyperelliptic $L$-functions, the machinery developed and strategies used are applicable to more general $L$-functions. In this article, we extend this work by considering $L$-functions attached to characters associated to geometric cyclic covers of the projective line, also commonly referred to as superelliptic curves due to the similarity of their defining equations with elliptic curves. See Definition \ref{superelliptic} for the precise definition of superelliptic curves. We first prove a lower bound on the number of $\ell$-th order superelliptic curves admitting a dominant map to a fixed curve when $\ell$ satisfies some congruence restrictions necessitated by our liberal use of Kummer theory:

\begin{theorem*}[Theorem \ref{GeneralThm}]

	Let $\ell$  be an odd prime coprime to $q$. Let $C_0$ be an $\ell$-th order superelliptic curve of genus $g$ defined over $\mathbb{F}_q$ with affine equation $y^\ell=\prod_{i=1}^{\ell-1}f_i^i$ where the $f_i$ are pairwise coprime, squarefree polynomials of degree $d_i$ each and $d=\sum_{i=1}^{\ell-1} d_i$. Take a model which satisfies $\sum_{i=1}^{\ell-1} id_i  \equiv 0 \bmod \ell$ and assume $f=\prod_{i=1}^{\ell-1}f_i^i$ is not a power of an irreducible polynomial. 
	Then for any $\epsilon>0$, there exist positive constants $B_\epsilon$ and $N_\epsilon$ such that the number of superelliptic curves in the form $y^\ell=\prod_{i=1}^{\ell-1} D_i^i(x)$ with $\sum_{i=1}^{\ell-1}\deg(D_i)\leq n$ that admit a dominant map to $C_0$ is at least $B_\epsilon \cdot q^{\frac{2n}{d} - \epsilon}$ for $n>N_\epsilon$.

\end{theorem*}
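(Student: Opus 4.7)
The plan is to produce many distinct superelliptic curves admitting a dominant map to $C_0$ by pulling back the cover $C_0 \to \mathbb{P}^1$ along non-constant rational maps $\phi : \mathbb{P}^1 \to \mathbb{P}^1$. Writing $\phi = P(x)/Q(x)$ with $\gcd(P,Q)=1$ and $e := \max(\deg P,\deg Q)$, let $C_\phi$ denote the smooth projective model of the affine curve $y^\ell = f(\phi(x))$. The canonical projection $C_\phi \to C_0$ is dominant of degree $e$, so it suffices to count the distinct $C_\phi$ whose standard form satisfies $\sum_i \deg D_i \leq n$.

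Next I would convert $C_\phi$ to standard superelliptic form. Setting $\tilde f_i(x) := Q(x)^{d_i} f_i(P(x)/Q(x))$, a polynomial of degree at most $ed_i$, the equation $y^\ell = \prod_i f_i(\phi(x))^i$ becomes, under the substitution $\tilde y := Q^{(\sum_i id_i)/\ell}\, y$ which is well-defined by the integrality hypothesis $\ell \mid \sum_i id_i$,
\[
\tilde y^\ell = \prod_{i=1}^{\ell-1} \tilde f_i(x)^i.
\]
Regrouping the distinct irreducible factors of the $\tilde f_i$ according to the total multiplicity modulo $\ell$ with which they appear in $\prod_i \tilde f_i^i$ yields the normalized form $y^\ell = \prod_i D_i(x)^i$ with the $D_i$ pairwise coprime and squarefree. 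The estimate
\[
\sum_i \deg D_i \leq \deg \mathrm{lcm}(\tilde f_1,\ldots,\tilde f_{\ell-1}) \leq \sum_i \deg \tilde f_i \leq e d
\]
shows that restricting to $e \leq \lfloor n/d \rfloor$ keeps us inside the counted family. Since the number of coprime pairs $(P,Q)$ over $\mathbb{F}_q$ (modulo scalars) with $\max(\deg P,\deg Q) = e$ is of order $q^{2e}$, taking $e = \lfloor n/d \rfloor$ supplies $\sim q^{2n/d}$ many $\phi$.

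The hard part is showing that this construction is essentially injective on isomorphism classes, so each curve $[C_\phi]$ arises from at most $O(q^3)$ rational maps. For fixed $[C_\phi]$, the de Franchis--Severi theorem renders $\mathrm{Mor}(C_\phi, C_0)$ finite (as soon as $g(C_0) \geq 2$), while the Kummer-cover structure $C_\phi \to \mathbb{P}^1$ is pinned down up to the $\mathrm{PGL}_2(\mathbb{F}_q)$-action on the target $\mathbb{P}^1$. The standing hypothesis that $f$ is not a power of a single irreducible polynomial enters at this stage to rule out degenerate scenarios---such as $C_0$ acquiring exceptionally many automorphisms, or many pairs $(\phi,\phi')$ with $f(\phi)/f(\phi')$ an $\ell$-th power in $\mathbb{F}_q(x)$---which would inflate the fibers beyond the $\mathrm{PGL}_2$ contribution. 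Combining the $\sim q^{2n/d}$ count of rational maps with this $O(q^3)$ bound on fibers produces at least a positive constant times $q^{2n/d - 3}$ pairwise non-isomorphic superelliptic curves, implying the advertised bound $B_\epsilon q^{2n/d - \epsilon}$ for every $\epsilon > 0$ and $n \geq N_\epsilon$.
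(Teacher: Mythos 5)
Your construction (pulling back the cover $C_0\to\mathbb{P}^1$ along rational maps $\phi=P/Q$) and your count of $\sim q^{2e}$ maps of height $e$ coincide with the paper's Lemma \ref{constructionC} and the first half of its proof of Theorem \ref{GeneralThm}. The gap is in the multiplicity bound, which you delegate to de Franchis--Severi plus an unproved $O(q^3)$ assertion. First, de Franchis--Severi requires $g(C_0)\geq 2$, but the theorem is stated for arbitrary $C_0$ and its main application (Theorem \ref{trigonalThm}) takes $C_0$ to be an elliptic curve, for which $\mathrm{Mor}(C_\phi,C_0)$ is \emph{infinite} (any nonconstant map can be composed with translations and multiplication maps); so your finiteness input fails exactly where it is needed. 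Second, even when $g(C_0)\geq 2$, effective de Franchis bounds grow with the genus of the source, and $g(C_\phi)$ grows with $n$, so finiteness alone does not give a fiber bound that is uniform in $n$. Third, you never actually deploy the hypothesis that $f$ is not a power of an irreducible; you only gesture at the ``degenerate scenarios'' it is meant to exclude.

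The paper's proof handles both points concretely, and you would need something equivalent. It first applies Poonen's function-field squarefree sieve (Proposition \ref{poonen}, after inverting the small primes as in Remark \ref{defineA}) to show that a positive proportion $\mu_{S_F}>0$ of pairs $(u,v)$ with $\deg u,\deg v\leq n/d$ make $F(u,v)=\prod_i F_i(u,v)$ squarefree, so the tuple $(F_1(u,v),\ldots,F_{\ell-1}(u,v))$ already lies in $P(n)$ and no regrouping of factors is needed. It then bounds the fiber over a fixed tuple $(D_1,\ldots,D_{\ell-1})$ by Bezout's theorem: if two of the $F_i$ are nonconstant, the system $D_1=F_1(u,v)$, $D_2=F_2(u,v)$ cuts out a zero-dimensional locus with at most $\deg F_1\deg F_2$ points; if only one $F_i$ is nonconstant, the hypothesis on $f$ forces that $F_i$ to be reducible, $F_i=F_{i,1}F_{i,2}$, and one runs Bezout over the $\ll N^{\epsilon}$ factorizations $D_i=D_{i,1}D_{i,2}$ and the units $a$. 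This gives a fiber bound $\ll qn^2N^{\epsilon}$ and hence the stated $q^{2n/d-\epsilon}$. Until you replace your de Franchis step with a uniform multiplicity bound of this kind---valid in particular for genus $\leq 1$ targets and making explicit use of the irreducibility hypothesis---the proof is incomplete.
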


Let $\mathcal A_\ell(n)$ be the set of degree $\ell$ primitive Dirichlet characters over $\mathbb{F}_q(t)$ whose conductor has norm at most $q^n$ and let $\mathcal B_\ell(n)$ be the subset of $\mathcal{A}_\ell(n)$ containing characters $\chi$ such that $L(1/2,\chi)=0$.

If we assume that $q \equiv 1 \bmod 3$, by the computation in Lemma \ref{primcount} we get 
\[|\mathcal A_3(n)|\sim c_{q}(n+1)q^n. \]
Using Theorem \ref{GeneralThm} we have the following result about the size of $\mathcal B_3(n)$:

\begin{theorem*}[Theorem \ref{trigonalThm}]
	Let $\mathbb{F}_q$ be a finite field of odd characteristic $p$ where $p \equiv 2 \bmod 3$, $q=p^e$, and $e \equiv 0 \bmod 4$. Then for any $\epsilon >0$, there exist positive constants $C_\epsilon$ and $N_\epsilon$, such that  $|\mathcal B_3(n)|\geq  C_\epsilon \cdot q^{\frac{2n}{3} - \epsilon} $ for any $n > N_\epsilon$.
\end{theorem*}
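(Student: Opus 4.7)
The strategy is to invoke Theorem \ref{GeneralThm} with the smallest admissible value of $d$, which is what produces the target exponent $2n/3$. Since $\ell=3$ and we want $d = d_1+d_2 = 3$ together with the normalization $d_1 + 2d_2 \equiv 0 \bmod 3$, the only possibilities are $(d_1,d_2) = (3,0)$ and $(0,3)$; the latter corresponds to a conjugate character, so we take $(d_1,d_2) = (3,0)$. Thus $C_0$ must be an elliptic curve $y^3 = f_1(x)$ with $f_1$ squarefree of degree $3$ whose attached cubic $L$-function vanishes at $s = 1/2$.

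The natural choice is the Fermat elliptic curve $C_0 : y^3 = x^3 - 1$. For $p \equiv 2 \bmod 3$ this is a $j$-invariant $0$ curve, supersingular over $\FF_p$ with Frobenius trace $a_p = 0$. The Frobenius eigenvalues $\alpha, \bar\alpha$ over $\FF_p$ thus satisfy $\alpha + \bar\alpha = 0$ and $\alpha\bar\alpha = p$, so over $\FF_{p^2}$ they become $-p, -p$, and over $\FF_q = \FF_{p^{4n}}$ they become $p^{2n} = \sqrt{q}$. Hence
\[
L(s, C_0) \;=\; (1 - \sqrt{q}\, q^{-s})^2,
\]
which vanishes to order $2$ at $s = 1/2$. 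The remaining hypotheses of Theorem \ref{GeneralThm} are immediate: since $e \equiv 0 \bmod 4$ and $p \equiv 2 \bmod 3$ we have $q \equiv 1 \bmod 3$, so $f_1 = x^3-1$ splits over $\FF_q$ as $(x-1)(x-\omega)(x-\omega^2)$ with $\omega$ a primitive cube root of unity, and is in particular squarefree and not a prime power; and $d_1 + 2d_2 = 3 \equiv 0 \bmod 3$.

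Theorem \ref{GeneralThm} now produces constants $B_\epsilon, N_\epsilon$ such that for $n > N_\epsilon$, at least $B_\epsilon \cdot q^{2n/3 - \epsilon}$ cubic superelliptic curves $C : y^3 = D_1(x) D_2(x)^2$ (with $D_1, D_2$ squarefree, coprime, and $\deg D_1 + \deg D_2 \leq n$) admit a dominant $\FF_q$-map to $C_0$. Any such map realizes $\mathrm{Jac}(C_0)$ as an isogeny factor of $\mathrm{Jac}(C)$, so the Frobenius characteristic polynomial on $H^1(C_0)$ divides that on $H^1(C)$; equivalently, $L(s, C_0) = (1 - \sqrt{q}\,q^{-s})^2$ divides $L(s, C) = L(s, \chi_C) L(s, \bar\chi_C)$, where $\chi_C$ is the cubic character attached to $C$. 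Since $L(s,\chi_C)$ and $L(s, \bar\chi_C)$ are complex conjugates and the eigenvalue $\sqrt{q}$ is real, the double zero at $s=1/2$ of $L(s,C_0)$ must contribute one simple zero to each factor; in particular $L(1/2, \chi_C) = 0$.

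Finally, $\chi_C$ is primitive with conductor $D_1 D_2$, whose norm is $q^{\deg D_1 + \deg D_2} \leq q^n$, so $\chi_C \in \mathcal A_3(n) \cap \{L(1/2,\cdot) = 0\} = \mathcal B_3(n)$. Two squarefree coprime pairs $(D_1, D_2)$ can yield the same cubic character only via the swap $(D_1', D_2') = (D_2, D_1)$, which produces the conjugate character $\bar\chi_C = \chi_C^2$; hence the assignment $C \mapsto \chi_C$ is at most $2$-to-$1$, and the count from Theorem \ref{GeneralThm} descends to $|\mathcal B_3(n)| \geq (B_\epsilon/2)\, q^{2n/3 - \epsilon}$, proving the theorem with $C_\epsilon := B_\epsilon/2$. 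The most delicate step is the conjugation-symmetry argument propagating the double zero of $L(s, C_0)$ into a simple zero of $L(s, \chi_C)$ at $s=1/2$; this is essentially why the hypothesis $e \equiv 0 \bmod 4$ is imposed (rather than merely $e$ even), as it ensures the Frobenius eigenvalues on $\mathrm{Jac}(C_0)$ are $+\sqrt{q}$ rather than $-\sqrt{q}$.
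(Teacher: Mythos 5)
Your proposal is correct and follows essentially the same route as the paper: exhibit a supersingular $j=0$ elliptic curve over $\FF_p$ (the paper uses $y^2=x^3+1$, rewritten as a cyclic cubic cover; you use $y^3=x^3-1$) whose Frobenius eigenvalues become $+\sqrt{q}$ after base change to $\FF_{p^e}$ with $4\mid e$, then apply Theorem \ref{GeneralThm} with this curve as $C_0$ and transfer the count to characters. The extra details you supply --- propagating the zero of $Z(C,q^{-s})$ to $L(1/2,\chi_C)$ via conjugate characters, and the bounded multiplicity of the model-to-character map --- are exactly what the paper packages into Lemma \ref{dirtocurves} and the remark following Theorem \ref{GeneralThm}.
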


\subsection*{Outline of the paper:}
In section \ref{Background}, we set up notation and background on the relationship between Dirichlet L-functions over function fields and zeta functions of curves. In section \ref{Counting}, we recall an explicit parametrization of $\ell$-th order superelliptic curves and maps between such curves. We use a result of Poonen \cite{Poonen} to prove Theorem \ref{GeneralThm}. In section \ref{MainResults}, we give a discussion on the existence of desired base curve for any $\ell$ to apply Theorem \ref{GeneralThm}. In the case of $\ell=3$ with $p,q$ satisfying the congruence condition in Theorem \ref{trigonalThm}, we apply Theorem \ref{GeneralThm} to an elliptic curve to prove Theorem \ref{trigonalThm}. For other $\ell$, we give an existence result Theorem \ref{ellThm} for $p,q$ satisfying some congruence conditions and $q$ sufficiently large.

\subsection*{Acknowledgments} The authors are grateful to  Patrick Allen, Siegfred Baluyot, Jordan Ellenberg and Soumya Sankar for helpful comments and suggestions. The second author was supported by the Simons collaboration on number theory, arithmetic geometry, and computation. 

\section{Notation and Background}\label{Background}

Let $p$ be an odd prime number and $q=p^e$. Let $\FF_q$ denote the field with $q$ elements, $k=\FF_q(t)$ the field of rational functions on $\FF_q$ in the variable $t$ and $A=\FF_q[t]$, the ring of polynomials. For a polynomial $f\in A$, we define the norm of $f$ as $|f|=q^{\deg(f)}$.

The ring $A$ is a maximal order in $k$. By \emph{function field} we mean a finite extension of $k$. A \emph{constant} extension of a function field $K$ with constant field $\mathbb{F}_q$ is an extension of function fields $L/K$ where $L=K \otimes_{\mathbb{F}_q} \FF_{q^n}$.  An extension of function fields $L/K$ is called \emph{geometric} if $L$ and $K$ have the same constant field $\FF_q$.

Over finite fields, there are various notions of $L$-functions and zeta functions. We now describe the relevant notions for our investigation and their relationships.

\subsection{$L$-functions attached to a Dirichlet character}
Let $f\in A$ be a monic polynomial. Then a \emph{Dirichlet character} of modulus $f$ is a group homomorphism $\chi :(A/fA)^*\to\CC^*$. 
For any multiple $mf$ of $f$, $\chi$ induces a homomorphism  $(A/mfA)^*\to\CC^*$. We call a character $\chi$ of modulus $f$ primitive if it cannot be induced from a modulus of smaller degree and refer to $f$ as the conductor of $\chi$. We can evaluate a Dirichlet character $\chi$ with conductor $f$ at an element $g\in A$ by

$$
\chi(g) =
\begin{cases}
\chi(g\bmod f) & \text{if } f \text{ and } g \text{ are coprime,} \\
0 & \text{else. }
\end{cases}
$$

To a Dirichlet character $\chi$, we attach a Dirichlet $L$-function of a complex variable $s$ by $$L(s,\chi)=\sum_{g}\frac{\chi(g)}{|g|^s}=\prod_{P}\left(1-\chi(P)|P|^{-s}\right)^{-1} $$
where the summation is over all monic polynomials in $A$ and the product is over all monic irreducible polynomials in $A$. 

Let $n \geq\deg(f)$ be an integer. Let $M_n$ denote the set of monic polynomials of $A$ of degree $n$. If $\chi$ is non-principal, the orthogonality relations for $\chi$ imply that the sum $$\sum_{g\in M_n}\chi(g)=0$$ and thus $L(s,\chi)$ is a polynomial in $q^{-s}$ of degree at most $\deg(f)-1$. If $\chi(ag)=\chi(g)$ for any $a\in \FF_q^*$, $g\in \FF_q[t]$, then $\chi$ is said to be \emph{ even}. L-functions associated to even characters always have a trivial zero at $s=0$. For any primitive character $\chi$, set  
\[
    \psi_\infty(\chi) = \begin{cases} 
    1 \text{ if }\chi \text{ is even;}\\
    0  \text{ else.}\end{cases}
\]
    
    Let $L^*(s,\chi):= (1-\psi_\infty(\chi))^{-1}L(s,\chi)$ be the completed $L$-function of $\chi$. Then it is a consequence of the Riemann-Hypothesis for function fields that all zeroes of $L^*(s,\chi)$ have real part equals $\frac{1}{2}$.

\subsection{$L$-functions attached to a Galois character}
A leisurely exposition of the following material can be found in Chapter 9 of \cite{Rosen}.

Let $L$ be a function field. A prime of $L$ is, by definition, a discrete valuation ring $R\subset L$ with maximal ideal $\mathfrak P$ that contains the field of constants of $L$. The \emph{zeta function} of  $L$ is defined as a function of a complex variable $s$: $$\zeta_L(s)= \prod_{\mathfrak P}\left(1-|\mathfrak P|^{-s}\right)^{-1} .$$
Here the product is taken over all the primes of $L$ and $|\mathfrak P|$ is the cardinality of the residue field at the prime $\mathfrak P$ (and is equal to $q^{\deg(\mathfrak P)}$ in the case when $L=\FF_q(t)$ and $\mathfrak P$ is identified with a monic irreducible polynomial in $\FF_q[t]$). 

Fix a finite geometric Galois extension $L/K$ of function fields that is abelian with Galois group $G$. To any prime $\mathfrak P $ lying above a prime $P$ of $K$ that is unramified in $L/K$, denote the Frobenius at $\mathfrak P$, an element of $G$, by Frob$_\mathfrak P$. Denote the group of $\CC$-valued multiplicative characters of $G$ by $\widehat G$. Given a character $\chi\in \widehat G$, we evaluate it at the primes of $L$ via the Frobenius. Namely,  

$$
\chi(\mathfrak P) =
\begin{cases}
\chi(\text{Frob}_\mathfrak{P}) & \text{if } \mathfrak{P}\text{ does not lie above a ramified prime in } L/K, \\
0 & \text{else. }
\end{cases}
$$

The \emph{Artin $L$-function} attached to $\chi$ is defined as an Euler product $$L(s,\chi)=\prod_\mathfrak{P} (1-\chi(\mathfrak{P})|\mathfrak P|^{-s})^{-1} $$
where the product runs over all primes of $L$. Denote by $\chi_0$ the principal character in $\widehat G$. Applying the orthogonality properties of characters to the splitting of primes in Galois extensions, we find the following decomposition $$\zeta_L(s)=\zeta_K(s)\prod_{\chi \neq \chi_0 \in \widehat G}L(s,\chi).$$

In analogy with the classical theory, each $L(s,\chi)$ admits a meromorphic continuation to the entire complex plane and satisfies a functional equation relating it to $L(1-s,\overline \chi)$ which has the following important consequence: $$L(1/2,\chi)=0\iff L(1/2,\overline\chi)=0.$$

We will be concerned with the specific choice of $G=\ZZ/\ell \ZZ$ and restrict to the case $q\equiv 1\pmod \ell$. Under this restriction, the base field contains all the $\ell$-th roots of unity. By Kummer theory, every Galois extension of $k=\FF_q(t)$ with Galois group $\ZZ/\ell \ZZ$ is obtained by adjoining an $\ell$-th root of an element in $k$.  
\subsection{Zeta functions attached to curves}

Let $C$ be a smooth, projective, geometrically integral curve of genus $g$ defined over $\FF_q$. Let $C(\FF_{q^n})$ denote the set of $\FF_{q^n}$-points of $C$. Then we set $T=q^{-s}$ and define the zeta function of $C$ as the following generating series: $$Z(C,T)= \exp\left(\sum_{n\geq 1}\frac{|C(\FF_{q^n})|T^n}{n}\right).$$

Weil \cite{Weil} proved in 1949 that $Z(C,T)$ is a rational function which can be written as $$Z(C,T)=\frac{P(T)}{(1-T)(1-qT)}$$ where $P(T)\in \ZZ[T]$ is a polynomial of degree $2g$. Moreover, $$P(T)=\prod_{i=1}^{2g} (1-\pi_iT) $$ where each $\pi_i$ is an algebraic integer with complex norm $|\pi_i|=\sqrt{q}$ under every complex embedding. $P(T)$ is in fact the characteristic polynomial of the geometric Frobenius acting on the $\ell$-adic Tate module of the Jacobian of the curve, denoted as $J(C)$ and the $\pi_i$ are therefore the eigenvalues under this action.

The category of smooth projective curves over $\FF_q$ with non-constant morphisms to $\mathbb{P}^1_{\mathbb{F}_q}$ is canonically equivalent to the category of geometric extensions of $\FF_q(t)$. The equivalence is realized simply by taking a curve $C$ to its function field $k(C)$. This induces the following relation of zeta functions $$Z(C,q^{-s})=\zeta_{k(C)}(s)$$

\subsection{Relations between $L$-functions}
Although our explicitly stated goal is to compute a lower bound on the number of \emph{Dirichlet} $L$-functions vanishing at $s=\frac{1}{2}$, our methods rely on finding curves whose zeta functions have some special properties. So we now describe a construction that relates the above $L$-functions with the zeta functions of curves. There is, in fact a fourth type of characters that we have not mentioned here. They are called \emph{Hecke characters} with their own attached $L$-functions, but since we only use them as intermediaries between Dirichlet $L$-functions and Artin $L$-functions, we do not discuss them in detail and refer the interested reader to chapter $9$ of \cite{Rosen} for further reading.

For a polynomial $f$, the group $(A/fA)^*$ is canonically isomorphic to the divisor class group of degree zero for the modulus $f\infty$, denoted by $Cl^o_{f\infty}$. Thus any Dirichlet character $\chi$ can be identified with a finite order character of the ray class group of modulus $f\infty$. By class field theory, this arises from a character of the ray class field $k_{f\infty}$ whose Galois group is Gal$(k_{f\infty}/k)\simeq Cl^o_{f\infty}\simeq (A/fA)^*$. 
The kernel of the character $\chi$, now considered as a homomorphism Gal$(k_{f\infty})\to \CC^*$ fixes a field $k_{\chi}\subset k_{f\infty}$ which is Galois over $k$ and has a cyclic Galois group.

We will crucially use the fact that under these identifications, the Dirichlet $L$-function, the Hecke $L$-function and the Artin $L$-function attached to $\chi$ (considered as a Dirichlet character, ray class character and Galois character respectively) are all identical. It is important for $\chi$ to be a primitive Dirichlet character for otherwise the resulting $L$-functions will be missing some Euler factors. 

However, we will need quantitative results connecting the degree of the conductor of the character $\chi$ and the genus of the smooth projective curve with function field $k_\chi$ in the above construction. Hence we will make extensive use of explicit class field theory for $k$ involving cyclotomic function fields.
\begin{definition}{\label{superelliptic}}
    For positive integers $n\geq 2$ and a prime power $q=p^e$ such that $(n,p)=1$, an \emph{$n$-th order superelliptic curve} over $\FF_q$ is a smooth projective curve $C$ whose function field $k(C)$ has the form  $k[y]/\langle y^n-f(t)\rangle$ where $f(t)$ is an $n$-th power free polynomial.
    \end{definition}

For the rest of this section, assume that $n=\ell$ is a prime such that $q \equiv 1 \pmod {\ell}$. This is equivalent to the field $\FF_q$ containing all the $\ell$-th roots of unity. Then every $\ell$-th order superelliptic curve is a branched Galois covering of $\PP_{\FF_q}$ with Galois group $\ZZ/\ell\ZZ$.

Conversely, given a Galois cover of $\PP_{\FF_q}$ with Galois group $\ZZ/\ell \ZZ$, by Kummer theory, the function field  $k(C)=K(\sqrt[\ell]{\alpha})$ for some $\alpha \in \FF_q[t]$ an $\ell$-th power free polynomial. Then $C$ can be defined as the smooth projective model of an affine, possibly singular curve given by equation: $$y^\ell = F_1(x) F_2^2(x) \ldots F_{\ell-1}^{\ell-1}(x) $$ where $(F_1,\ldots,F_{\ell-1})$ are pairwise coprime square-free polynomials of degree $(d_1,\ldots,d_{\ell-1})$ respectively. Of course, more than one such defining equation could correspond to the same (isomorphism class of a) superelliptic curve. From now on, we denote $\sum_{i=1}^{\ell-1}d_i$ by $d$. The genus of the curve $C$ is given by 
\begin{align*}
 g=  &\frac{(d-2)(\ell-1)}{2}.
\end{align*}

We now describe their relationship between primitive $\ell$-th order Dirichlet characters whose L-functions that vanish at $s=\frac{1}{2}$ and $\ell$-th order superelliptic curves whose zeta functions vanish at $s=\frac{1}{2}$.
\begin{itemize}
\item Let $\mathcal S_{\ell}(n)$ be the set of isomorphism classes of $\ell$-th order superelliptic curves of genus $g \leq \frac{(\ell-1)(n-2)}{2}$.
\item Let $\mathcal T_{\ell}(n)$ be the subset of curves $C$ in $\mathcal S_{\ell}(n)$ for which $Z(C,q^{-\frac{1}{2}})=0$. 
\item Let $\mathcal A_{\ell}(n)$ be the set of primitive $\ell$-th order Dirichlet characters with conductor $m$ with  $\deg m \le n$. 
\item Let $\mathcal B_{\ell}(n)$ be the subset of $\mathcal A_{\ell}(n)$ consisting of $\chi$ for which $L(\frac{1}{2},\chi)=0$. 
\end{itemize}
Once $\ell$ is fixed, there do not exist curves $C$ of genus $g$ for arbitrary genus unless $\ell=2$. This latter case is dealt with in \cite{Wanlin} and in this situation there is a bijection between models of hyperelliptic curves $y^2=D$ and primitive quadratic non-principal Dirichlet characters.

The following key lemma lets us transfer our attention from Dirichlet $L$-functions to Zeta functions of superelliptic curves:

\begin{lemma} \label{dirtocurves} If $q\equiv 1 \pmod \ell$, then $|\mathcal B_{\ell}(n)| \geq |\mathcal T_{\ell}(n)|$ for all positive $n$. 
\end{lemma}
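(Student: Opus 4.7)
The plan is to construct an injection $\Phi: \CT_\ell(n) \hookrightarrow \mathcal{B}_\ell(n)$. Given $C \in \CT_\ell(n)$, the function field $L = k(C)$ is a cyclic degree-$\ell$ Kummer extension of $k$ (using $q \equiv 1 \pmod \ell$). Let $G = \Gal(L/k) \cong \ZZ/\ell\ZZ$ and let $\chi_1, \ldots, \chi_{\ell-1}$ denote the nontrivial characters in $\widehat G$. Via the class-field-theoretic identifications recalled in the preceding subsections, each $\chi_i$ corresponds to a primitive Dirichlet character whose Artin, Hecke, and Dirichlet $L$-functions all coincide, and since $L/k$ has prime degree, all nontrivial characters of $G$ share a common conductor equal to the conductor of $L/k$.

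I would then combine the zeta-function factorization
$$Z(C, q^{-s}) = \zeta_L(s) = \zeta_k(s) \prod_{i=1}^{\ell-1} L(s, \chi_i)$$
with the observation that $\zeta_k(1/2) = \bigl((1-q^{-1/2})(1-q^{1/2})\bigr)^{-1}$ is finite and nonzero. The hypothesis $Z(C, q^{-1/2}) = 0$ therefore forces $L(1/2, \chi_i) = 0$ for some $i$, and the functional equation $L(1/2,\chi)=0 \iff L(1/2,\overline{\chi})=0$, together with $\chi_i \neq \overline{\chi}_i$ for odd $\ell$, in fact yields at least two such vanishing $L$-functions per curve.

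Next I would verify the conductor bound. Since $\gcd(\ell,p)=1$, the extension $L/k$ is tamely ramified, so the local conductor exponent at each ramified prime is $1$, and $\deg \mathfrak{f}(\chi_i)$ equals the degree of the ramification divisor of $L/k$. For a defining model $y^\ell = \prod_j f_j^j$ with $\sum_j j d_j \equiv 0 \pmod \ell$ (so that $\infty$ is unramified), every finite prime dividing some $f_j$ is totally ramified, while all others are unramified; hence $\deg \mathfrak{f}(\chi_i) = \sum_j d_j = d$. Since $C \in \CT_\ell(n) \subseteq \mathcal{S}_\ell(n)$ has $g = (\ell-1)(d-2)/2 \leq (\ell-1)(n-2)/2$, we get $d \le n$, so each associated $\chi_i$ lies in $\CA_\ell(n)$.

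Finally, defining $\Phi(C)$ to be a canonical choice (say, the lexicographically smallest $\chi_i$ with $L(1/2, \chi_i) = 0$ under a fixed enumeration of primitive characters of order $\ell$), injectivity follows because distinct isomorphism classes of curves in $\CT_\ell(n)$ give distinct function fields, hence distinct cyclic extensions $L \neq L'$ of $k$ of prime degree $\ell$. Any nontrivial character associated to both $L$ and $L'$ would cut out a cyclic subextension contained in $L \cap L' = k$, forcing it to be trivial. Hence $\Phi$ is injective, proving $|\mathcal{B}_\ell(n)| \geq |\CT_\ell(n)|$. The main technical care is reconciling the conductor and genus when one would otherwise need a model in which $\infty$ ramifies; this can be handled either by normalizing the model to satisfy $\sum_j j d_j \equiv 0 \pmod \ell$ or by recomputing Riemann--Hurwitz in the ramified-at-$\infty$ case, where both the genus formula and the conductor degree shift by the same correction.
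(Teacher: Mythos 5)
Your proposal is correct and follows essentially the same route as the paper: identify the curve's function field with a cyclic Kummer/class-field extension of $k$ so that its nontrivial Galois characters become primitive Dirichlet characters, use the factorization $Z(C,q^{-s})=\zeta_k(s)\prod_i L(s,\chi_i)$ together with $\zeta_k(1/2)\neq 0,\infty$ to force some $L(1/2,\chi_i)=0$, and control the conductor via tameness and the conductor--discriminant/Riemann--Hurwitz relation $g=(\ell-1)(d-2)/2$. The only difference is organizational (you build the injection $\mathcal T_\ell(n)\hookrightarrow\mathcal B_\ell(n)$ directly, while the paper phrases it via a map $\mathcal A_\ell(n)\to\mathcal S_\ell(n)$ with a lifting property), and your explicit remarks on $\zeta_k(1/2)$ and on injectivity via distinctness of the extensions $L/k$ are welcome refinements of the same argument.
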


\begin{proof}

We construct a surjective map $\varphi: \mathcal A_{\ell}(n) \to \mathcal S_{\ell}(n)$ such that for each $C\in \mathcal T_{\ell}(n)$, there is a $\chi \in \mathcal B_\ell(n)$ such that $\varphi(\chi)=C$.

For each non-constant polynomial $m\in A$, explicit class field theory for $k$ describes a geometric Galois extension $k_m/k$ with Gal$(k_m/k)\simeq (A/mA)^*$.  The field $k_m$ is called the cyclotomic function field associated to the polynomial $m$. 

Given a primitive $\ell$-th order Dirichlet character $\chi$ of conductor $m$, its kernel determines, via the isomorphism Gal$(k_m/k) \simeq (A/mA)^*$, a geometric Galois extension $k_\chi/k$ with Galois group $\ZZ/\ell\ZZ$. Since $k$ contains all the $\ell$-th roots of unity, by Kummer theory $k_\chi = k(\sqrt[\ell]{D})$ for some $\ell$-th power free polynomial $D \in A$, which is the function field of the (possibly singular) affine curve $y^\ell = D$, whose normalization is exactly the $\ell$-th order superelliptic curve $C$ we call $\phi(\chi)$. Then by the Riemann-Hurwitz formula, $$2g-2=l(2g_{\PP}-2)+\deg(\text{Disc}(k(C)/k)) $$ and applying the conductor-discriminant formula for function fields, we find that the genus of the curve and the degree of it's conductor are related by  $$g = \frac{(\ell-1)(n-2)}{2} $$ 

To show that $\varphi$ is surjective, given a curve $C$ in $\mathcal S_{\ell}(g)$, the function field of $C$, $k(C)$, is a geometric abelian extension of $k$ that is unramified or tamely ramified at infinity since all the ramification indices at the ramification points of the covering map $C\to \PP$ divide $\ell$. Again, by the conductor-discriminant formula for function fields, the conductor of $k(C)$ is a monic polynomial $m$ of degree $\frac{2g}{\ell-1}+2$. As a consequence of explicit class field theory for $k$, we have an inclusion $k\subset k(C) \subset k_m$, where $k_m$ is the cyclotomic function field associated to the polynomial $m$. We may correspondingly identify a subgroup  $H\subset \text{Gal}(k_m/k) \simeq (A/mA)^*$ and pick any character . If $\chi$ is not primitive for modulus $m$, we may find an appropriate divisor of $m$ for which it is.

Given a prime $P$ of $k$, it's behavior in the extension $k_m/k$ is determined entirely by it's residue class modulo $m$ (Theorem 12.10 of \cite{Rosen}). As a consequence, the character $\chi$ has the same $L$-function whether it is interpreted as a Dirichlet character on $(A/mA)^*$ or a Galois character on Gal$(k_m/k)$. Since the zeta function $Z(C,u)$ decomposes as a product of $L$-functions of the Galois group Gal$(k_m/k)$ and the zeta function of $\PP$, we find that the condition $Z(C,q^{-1/2})=0$ forces the Dirichlet $L$-function associated to one of the characters $\chi\in \mathcal A_\ell(n)$ to vanish at $s=\frac{1}{2}$. Thus the map $\varphi$ has the claimed property and the lemma follows.

\end{proof}

As we are most interested in finding a lower bound on $|B_\ell(n)|$, we restrict our attention to study $\ell$-th order superelliptic curves and speak no further of Dirichlet characters, thanks to Lemma 2.2.

In the next section, we study models of $\ell$-th order superelliptic curves $C$ for which $Z(C,q^{-1/2})=0$. Since we are working with isomorphism classes of curves, we can use models for $\ell$-order superelliptic curves that are not ramified at $\infty$. These models satisfy: $$\sum_{i=1}^{\ell-1}id_i \equiv 0 \bmod \ell .$$
\subsection{Counting Primitive Characters}

We now count the number of primitive Dirichlet characters with conductor of fixed degree, as well as primitive Dirichlet characters of a fixed order. For this section, let  $\zeta_q(s) $ denote the zeta function of $\FF_q(t)$.

\begin{lemma}
Denote the number of primitive Dirichlet characters $\chi$ whose conductor is a polynomial of degree $d$ by $\mathcal A(d)$. Then, \[ \mathcal A(d)=\begin{cases} 
      1 & d= 0 \\
      q^2-2q & d=1 \\
      q^{2d-2}(q-1)^2 & d\geq 2 
   \end{cases}
\]

\end{lemma}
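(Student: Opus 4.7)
The plan is to use Möbius inversion on polynomials to reduce the count of primitive characters of a given conductor degree to two standard sums: one involving the Euler totient $\phi$ on $\mathbb{F}_q[t]$ and one involving the Möbius function $\mu$ on $\mathbb{F}_q[t]$.

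First I would recall the inclusion–exclusion identity. For each monic polynomial $f$, every Dirichlet character mod $f$ is induced from a unique primitive character whose conductor divides $f$, so $\phi(f) = \sum_{g \mid f} P(g)$, where $P(g)$ denotes the number of primitive characters of conductor exactly $g$. Möbius inversion yields
\[
P(f) = \sum_{g \mid f} \mu(f/g)\,\phi(g).
\]
Summing over monic $f$ of degree $d$ and interchanging the order of summation,
\[
\mathcal{A}(d) = \sum_{\deg f = d} P(f) = \sum_{e=0}^{d} S(e)\,M(d-e),
\]
where $S(e) := \sum_{\deg g = e} \phi(g)$ and $M(n) := \sum_{\deg h = n} \mu(h)$, with both sums over monic polynomials.

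Next I would plug in the two standard evaluations. The Möbius sum satisfies $M(0)=1$, $M(1)=-q$, and $M(n)=0$ for $n\geq 2$, which is immediate from the identity $\sum_{h \mid t^n} \mu(h) = 0$ or equivalently from $\sum_n M(n) u^n = (1-qu)$, the reciprocal of the zeta function $\zeta_q(s)$ with $u=q^{-s}$. The totient sum satisfies $S(0)=1$ and $S(e) = q^{2e} - q^{2e-1}$ for $e \geq 1$; this can be obtained either by a direct count (every monic $f$ of degree $e$ has $\phi(f) = |f| \cdot \prod_{P\mid f}(1-|P|^{-1})$, and summing gives $q^{2e}(1-q^{-1})$), or from the generating function identity $\sum_e S(e) u^e = \zeta_q(s-1)/\zeta_q(s)$ with $u=q^{-s}$.

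Because $M(n)$ vanishes for $n \geq 2$, the convolution collapses to at most two terms:
\[
\mathcal{A}(0) = S(0) = 1, \qquad \mathcal{A}(d) = S(d) - q\,S(d-1) \quad \text{for } d \geq 1.
\]
The three cases of the lemma then follow by direct substitution: $\mathcal{A}(1) = (q^2-q) - q\cdot 1 = q^2 - 2q$, and for $d\geq 2$,
\[
\mathcal{A}(d) = (q^{2d}-q^{2d-1}) - q(q^{2d-2}-q^{2d-3}) = q^{2d-2}(q-1)^2.
\]
There is no real obstacle here; the only mild subtlety is bookkeeping at $d=0$ and $d=1$ where $S(0)=1$ rather than $q^2(1-q^{-1})=q^2-q$, so one must treat these cases separately when applying the formula for $S(e)$.
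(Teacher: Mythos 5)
Your proof is correct and follows essentially the same route as the paper: both start from the identity $\phi(f)=\sum_{g\mid f}Q(g)$ and invert it, the paper by dividing Dirichlet series to get $\sum_d \mathcal A(d)u^d=\zeta_q(s-1)/\zeta_q(s)^2=(1-qu)^2/(1-q^2u)$, you by doing the equivalent Möbius inversion and coefficient convolution by hand. The only cosmetic difference is that you factor the generating function as $\bigl(\zeta_q(s-1)/\zeta_q(s)\bigr)\cdot\zeta_q(s)^{-1}$ and evaluate the two coefficient sequences $S(e)$ and $M(n)$ separately before convolving.
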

\begin{proof}
For a monic polynomial $f\in \FF_q[t]$, let $\Phi(f)$ denote the number of characters of modulus $f$ and let $Q(f)$ denote the number of primitive characters with conductor $f$. Since abelian groups are isomorphic to their character groups, we have $\Phi(f)=|(\FF_q[t]/f)^*|$. A consideration of the possible conductors of a character reveals that the functions $\Phi(f)$ and $Q(f)$ are related by:
    $$\Phi(f)= \sum_{g|f, \text{monic}}Q(g).$$
The above convolution leads to a relation among the corresponding Dirichlet series: $$\sum_{f \text{ monic}}\frac{\Phi(f)}{|f|^s} = \zeta_q(s)\sum_{f \text{ monic }}\frac{Q(f)}{|f|^s} $$
Here $\zeta_q(s)$ is the affine zeta function of the ring $\FF_q[t]$. It differs from the zeta function of $\FF_q(t)$ by a single factor of $\frac{1}{1-q^{-s}}$. Using the substitution $u=q^{-s}$, $\zeta_q(s)$ takes the form $(1-qu)^{-1}$. The left hand side is given by $\frac{\zeta_q(s-1)}{\zeta_q(s)}$, which is $\frac{1-qu}{1-q^2u}$. We then reorganize the following expression by grouping together factors of the same degree:
$$\sum_{f \text{ monic }}\frac{Q(f)}{|f|^s}=\sum_{d=0}^\infty \left(\sum_{\substack{f \text{ monic} \\ \deg(f)=d}}Q(f)\right)u^d $$
The parenthetical expression is the number of primitive characters with conductor of degree $d$, denoted $\mathcal A(d)$, It satisfies $$\sum_{d=0}^\infty \mathcal A(d)u^d = \frac{\zeta_q(s-1)}{(\zeta_q(s))^2}=\frac{(1-qu)^2}{(1-q^2u)}=(1-2uq+q^2)\sum_{d=0}^\infty (q^2u)^d$$
Comparing the coefficients of $u^d$, we obtain the stated formula.
\end{proof}

Since we study $L$-functions attached to characters of a fixed order, we now count the number of characters of order $\ell$ whose conductor has prescribed degree $d$.

\begin{lemma}
\label{primcount}
Fix odd primes $p, \ell$ and $q=p^n$ such that $q\equiv 1 \pmod \ell$. Let $\mathcal A_\ell(d)$ denote the number of Dirichlet characters of order $\ell$ whose conductor has degree $d$. Then there exists a positive constant $c_{q,\ell}$ such that $\mathcal A_\ell(d)\sim c_{q,\ell}q^dd^{\ell-2} $.
\end{lemma}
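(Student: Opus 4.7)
The plan is to mirror the Dirichlet-series strategy of the previous lemma, but now stratified by order. First I would count, for each conductor $f$, the number of primitive characters of order exactly $\ell$. These are the non-trivial primitive elements of $\operatorname{Hom}((A/fA)^*, \mu_\ell)$. Writing $f = \prod_i P_i^{e_i}$ and decomposing via the Chinese remainder theorem, the kernel of $(A/P^e)^* \to (A/P)^*$ is a pro-$p$ group, so because $\ell \neq p$ every character of $\ell$-power order factors through $(A/P)^*$; hence the conductor must be squarefree. On a squarefree $f$ with $k$ irreducible factors, the hypothesis $q \equiv 1 \pmod \ell$ ensures $\ell \mid q^{\deg P_i} - 1$, so each cyclic factor $\mathbb{F}_{q^{\deg P_i}}^*$ carries exactly $\ell - 1$ non-trivial characters of order dividing $\ell$. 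The overall character is primitive iff its restriction to each factor is non-trivial, yielding $(\ell - 1)^{k}$ primitive characters of order exactly $\ell$ with conductor $f$, and none if $f$ is not squarefree.

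Second I would package this in a generating function and expose its dominant singularity:
\[ F_\ell(u) := \sum_{d \geq 1} \mathcal A_\ell(d)\, u^d = -1 + \prod_P \bigl(1 + (\ell - 1) u^{\deg P}\bigr), \]
the product running over monic irreducibles of $A$. The crucial algebraic identity is
\[ g(u) := (1 + (\ell - 1) u)(1 - u)^{\ell - 1} = 1 + O(u^2), \]
which follows at once from computing the $u^1$ coefficient $-(\ell - 1) + (\ell - 1) = 0$. Using this, one obtains
\[ \prod_P \bigl(1 + (\ell - 1) u^{\deg P}\bigr) = h(u) \prod_P (1 - u^{\deg P})^{-(\ell - 1)} = \frac{h(u)}{(1 - qu)^{\ell - 1}}, \]
where $h(u) := \prod_P g(u^{\deg P})$. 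Combining $g(u) - 1 = O(u^2)$ with the bound $\#\{P : \deg P = n\} \leq q^n/n$ shows that $h$ converges absolutely and is analytic and non-vanishing on the disk $|u| < q^{-1/2}$ (non-vanishing because $g > 0$ on $(0, 1)$), a region strictly containing $u = 1/q$.

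Finally, a standard coefficient extraction yields
\[ \mathcal A_\ell(d) = [u^d] \frac{h(u)}{(1 - qu)^{\ell - 1}} = q^d \sum_{k \leq d} h_k q^{-k} \binom{d - k + \ell - 2}{\ell - 2} \sim \frac{h(1/q)}{(\ell - 2)!}\, q^d\, d^{\ell - 2}, \]
using that $\binom{d - k + \ell - 2}{\ell - 2} \sim d^{\ell - 2}/(\ell - 2)!$ uniformly for $k$ in any bounded range and that the tail of the absolutely convergent series $h(1/q) = \sum_k h_k q^{-k}$ contributes negligibly. One then sets $c_{q, \ell} := h(1/q)/(\ell - 2)! > 0$. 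The main obstacle is spotting the identity $(1 + (\ell - 1)u)(1 - u)^{\ell - 1} = 1 + O(u^2)$: this is precisely what lets $(1 - qu)^{-(\ell - 1)}$ be extracted as the dominant singularity with an analytic correction $h(u)$, so that $F_\ell$ has a pole of order exactly $\ell - 1$ at $u = 1/q$, producing the $q^d d^{\ell - 2}$ growth. Once this factorization is in hand, the remaining Tauberian bookkeeping is routine.
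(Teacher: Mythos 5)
Your proof is correct and follows essentially the same route as the paper: the same reduction to squarefree conductors via the $p$-group kernel of $(A/P^eA)^*\to(A/P)^*$, the same count of $(\ell-1)^k$ primitive characters per squarefree conductor, the same Euler product $\prod_P(1+(\ell-1)u^{\deg P})$, and the same comparison with $\zeta_q(u)^{\ell-1}$. The only divergence is at the end, where the paper invokes a standard Tauberian theorem (Rosen, Thm.\ 17.4) while you extract the asymptotic by hand via the factorization $h(u)(1-qu)^{-(\ell-1)}$; this is fine and in fact makes explicit that the pole has exact order $\ell-1$ and $c_{q,\ell}>0$ --- just note that all you need is $h(1/q)>0$, since non-vanishing of $h$ on the entire disk $|u|<q^{-1/2}$ can fail at complex points when $\ell-1>\sqrt{q}$.
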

\begin{proof}
For this proof, denote $A=\FF_q[t]$. For a monic element $f\in A$, the characters of modulus $f$ of order $\ell$ along with the trivial character are given by Hom$((A/fA)^*,\ZZ/\ell\ZZ)$.
We can decompose $f$ into a product of primes $\prod_{i=1}^rP_i^{e_i}$, where $r=\omega(f)$, the number of distinct monic prime factors of $f$. For any prime $P\in A$, the group $(A/P^eA)^* $ is given as an extension of the cyclic group $(A/P)^*$ by a $p$-group $G$:
$$1 \to G \to  (A/P^eA)^*\to (A/P)^*\to 1 $$
As there are no non-trivial maps from $G$ to $\ZZ/\ell\ZZ$, every map in Hom$((A/P^eA)^*,\ZZ/\ell\ZZ)$ factors through $(A/P)^*$. We conclude that every primitive character of order $\ell$ has square-free conductor and also that $$|\text{Hom}((A/fA)^*,\ZZ/\ell\ZZ)|=\prod_{i=1}^{r} |\text{Hom}(A/PA,\ZZ/\ell\ZZ)|=\ell^{r} $$

Thus, for a square-free polynomial $f=\prod_{i=1}^r P_i$, there are $(\ell-1)^r$ primitive characters with conductor $f$. In particular, if $\ell=2$, this recovers the fact that for every square-free polynomial $f$, there is a unique quadratic character with conductor $f$. It follows that the generating series $\mathcal G_\ell(u)$ for the number of primitive characters of order $\ell$ is given by $$\mathcal G_\ell(u)=\sum_{d=0}^\infty\mathcal A_\ell(d)u^d=\prod_{P}(1+(\ell-1)u^{\deg P}), $$
where $\mathcal A_\ell(d)$ is the number of primitive order $\ell$ characters with conductor of degree $d$. 

Let $\zeta_q(u)=\prod_{P}(1-u^{\deg P} )^{-1}$ be the zeta function of $\FF_q(t)$. It converges when $|u|<\frac{1}{q}$ and is analytically continued to the entire complex plane by the function $\frac{1}{1-qu}$ with a single simple pole at $u=\frac{1}{q}$. Since $$\zeta_q(u)^{\ell-1}=\prod_{P}(1-u^{\deg P} )^{-(\ell-1)}=\prod_{P}(1+(\ell-1)u^{\deg P}+O(u^{2\deg P} )),$$ it follows that $\mathcal G_\ell(u)$ can be analytically continued to $|u|\leq \frac{1}{q}$ with only a single pole of order $\ell-1$ at $u=\frac{1}{q}$. By a standard Tauberian theorem, e.g. Theorem 17.4 of \cite{Rosen}, we conclude that there is a positive constant $c_{q,\ell}$ such that $$\mathcal A_\ell(d)\sim c_{q,\ell}q^dd^{\ell-2}.$$
\end{proof}
\section{Maps Between $\ell$-th order superelliptic curves}\label{Counting}

\begin{lemma}\label{C0toC}
Let $C_0$ be a curve over $\FF_q$ with zeta function $Z(C_0,q^{-s})$. If $Z(C_0,q^{-s_0})=0$ for some $s_0\in\CC$, then the zeta function of any curve $C$ defined over $\FF_q$ admitting a dominant map to $C_0$ also satisfies  $Z(C,q^{-s_0})=0$.	
\end{lemma}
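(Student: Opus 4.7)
The plan is to exploit the functoriality of the Jacobian. A dominant (equivalently, non-constant) morphism $f \colon C \to C_0$ of smooth projective curves over $\FF_q$ induces a pullback homomorphism $f^* \colon J(C_0) \to J(C)$ defined over $\FF_q$, and the standard identity $f_* \circ f^* = [\deg f]$ on $J(C_0)$ shows that $\ker f^*$ is contained in the $\deg(f)$-torsion of $J(C_0)$, hence is finite. Thus $J(C_0)$ is isogenous over $\FF_q$ to an abelian subvariety of $J(C)$.

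Next, I would pass to $\ell$-adic Tate modules for some prime $\ell$ coprime to $q$. The induced map on rational Tate modules $V_\ell(J(C_0)) \hookrightarrow V_\ell(J(C))$ is an injection of finite-dimensional $\QQ_\ell$-vector spaces, equivariant for the geometric Frobenius $\frob_q$ since $f$ is defined over $\FF_q$. Multiplicativity of characteristic polynomials along the resulting short exact sequence of $\frob_q$-modules then yields the divisibility
\[
P_{C_0}(T) \mid P_C(T) \quad \text{in } \ZZ[T],
\]
where $P_C$ and $P_{C_0}$ are the numerators of the Weil factorizations of $Z(C,T)$ and $Z(C_0,T)$ recalled in Section~\ref{Background}.

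Since the denominators $(1-T)(1-qT)$ in $Z(C_0,T)$ and $Z(C,T)$ coincide, every zero of $Z(C_0, T)$ is also a zero of $Z(C, T)$. Substituting $T = q^{-s_0}$, the hypothesis $Z(C_0, q^{-s_0}) = 0$ forces $P_{C_0}(q^{-s_0}) = 0$, hence $P_C(q^{-s_0}) = 0$, giving $Z(C, q^{-s_0}) = 0$. No serious obstacle is anticipated: the only step requiring justification is the finiteness of $\ker f^*$, which is immediate from the trace-pullback identity, and the rest is a routine application of the functoriality of Tate modules together with the Weil factorization.
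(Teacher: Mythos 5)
Your proposal is correct and follows essentially the same route as the paper: the paper's proof likewise observes that the dominant map makes $J(C_0)$ an isogeny factor of $J(C)$, so that the Frobenius eigenvalues of $T_\ell J(C_0)$ appear among those of $T_\ell J(C)$, and hence the roots of $Z(C_0,T)$ are roots of $Z(C,T)$. You have merely filled in the standard details (the identity $f_*\circ f^*=[\deg f]$ and the passage to rational Tate modules) that the paper leaves implicit.
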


\begin{proof}
	The dominant map from $C$ to $C_0$ induces an isogeny from the Jacobian $J(C_0)$ to an isogenous factor of $J(C)$. So the eigenvalues of the Frobenius action on the $\ell$-adic Tate module of $J(C_0)$ also appear as Frobenius eigenvalues of $T_\ell J(C)$. As the roots of the zeta function of a curve exactly correspond to these Frobenius eigenvalues, the result follows.
\end{proof}

Lemma \ref{C0toC} allows us to construct infinitely many curves whose zeta function admits some value as a root from the existence of one such curve. 
Thus, if we could show the existence of one $\ell$-th order superelliptic curve $C_0$ admitting a specific Frobenius eigenvalue (namely $q^\frac{1}{2}$, but the specific eigenvalue is not important here), then
 to give a lower bound on the number of all $\ell$-th order superelliptic curves admitting that specific Frobenius eigenvalue, it is enough for us to provide a lower bound on the number of $\ell$-th order superelliptic curves with genus less or equal to $g$ which admit a dominant map to $C_0$. To do this, we give an explicit construction of such a dominant map.

\begin{lemma}\label{constructionC}
Let $C_0$ be the smooth projective model of the curve given by affine equation $$y^\ell=f_1f_2^2 \ldots f_{\ell-1}^{\ell-1}$$ where each $f_i\in \mathbb{F}_q[x]$ is square-free. If $C$ is the smooth projective model of the curve defined by $$y^\ell = f_1(h(x))f_2^2(h(x))\ldots f_{\ell-1}^{\ell-1}(h(x)) $$ for a non-constant rational function $h(x) \in \mathbb{F}_q(x)$, there exists a dominant map from $C$ to $C_0$.
\end{lemma}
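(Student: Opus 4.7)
The natural plan is to work on function fields and translate the result back to curves via the equivalence between smooth projective curves over $\mathbb{F}_q$ and finitely generated extensions of $\mathbb{F}_q$ of transcendence degree one, under which dominant morphisms correspond to field inclusions in the opposite direction.

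Concretely, I would write $\mathbb{F}_q(C_0) = \mathbb{F}_q(x_0, y_0)$ with the relation $y_0^\ell = f_1(x_0) f_2(x_0)^2 \cdots f_{\ell-1}(x_0)^{\ell-1}$, and $\mathbb{F}_q(C) = \mathbb{F}_q(x, y)$ with the analogous relation in $h(x)$. I would then define a map of $\mathbb{F}_q$-algebras $\mathbb{F}_q(C_0) \to \mathbb{F}_q(C)$ by sending $x_0 \mapsto h(x)$ and $y_0 \mapsto y$. The well-definedness reduces to checking that the defining relation is preserved: substituting gives $y^\ell = f_1(h(x)) f_2(h(x))^2 \cdots f_{\ell-1}(h(x))^{\ell-1}$, which is exactly the defining equation of $C$. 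Since any nonzero $\mathbb{F}_q$-algebra homomorphism between fields is automatically injective, this realizes $\mathbb{F}_q(C_0)$ as a subfield of $\mathbb{F}_q(C)$.

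To verify that the resulting inclusion corresponds to a genuine dominant morphism (rather than collapsing to a point), I would observe that $h(x)$ is a nonconstant rational function, so $\mathbb{F}_q(h(x)) \subset \mathbb{F}_q(x)$ is a finite extension of transcendence degree zero, and hence $\mathbb{F}_q(C) / \mathbb{F}_q(C_0)$ is a finite (in particular algebraic) extension of function fields of the same transcendence degree. By the standard equivalence of categories, this finite extension of function fields is induced by a unique dominant (indeed, finite) morphism $C \to C_0$ of smooth projective curves over $\mathbb{F}_q$, giving the claim.

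No part of this is really an obstacle; the only subtle point is to confirm that the defining equations are irreducible enough that $\mathbb{F}_q(x,y)$ and $\mathbb{F}_q(x_0,y_0)$ are genuinely the function fields of the smooth projective models, but this follows from the standing assumption that the $f_i$ are pairwise coprime and squarefree (so $\prod f_i^i$ is $\ell$-th power free, making $y^\ell - \prod f_i(x)^i$ irreducible over $\mathbb{F}_q(x)$ by Kummer theory), together with the fact that taking the smooth projective model does not alter the function field.
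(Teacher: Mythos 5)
Your proof is correct and is essentially the paper's argument in dual form: the paper simply writes down the dominant map $\psi:C\to C_0$, $(x,y)\mapsto (h(x),y)$, on coordinates, which is exactly the morphism induced by your field inclusion $x_0\mapsto h(x)$, $y_0\mapsto y$. The extra verifications you supply (injectivity of the field map, nonconstancy of $h$, irreducibility of the defining equation) are sound and merely fill in details the paper leaves implicit.
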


\begin{proof}
A dominant map $\psi: C \rightarrow C_0$ is given by $(x,y) \mapsto (h(x),y)$.
\end{proof}

Fix an $\ell$-th order superelliptic curve $C_0$ of genus $g_0$ with a defining equation $$y^\ell = f_1(x)f_2^2(x)\ldots f_{\ell-1}^{\ell-1}(x).$$  and consider the set $G(C_0,g)$ of models of $\ell$-th order superelliptic curves $C$ with $g(C)  \leq g$ and admitting a dominant map $\phi: C \rightarrow C_0$.

To give a lower bound for the size of $G(C_0, g)$, using Lemma \ref{constructionC} it suffices to count the number of curves of bounded genus, admitting a defining equation of the form $$y^\ell = f_1(h)f_2^2(h)\ldots f_{\ell-1}^{\ell-1}(h)$$ where $$h(x)=p(x)/q(x) \text{ for some } p(x),q(x) \in \mathbb{F}_q[x].$$ Define $$\deg h = \max \{ \deg p, \deg q \}.$$ If $$\deg h \le (g+\ell-1)/(g_0+\ell-1),$$ then it is guaranteed that the genus of $C$ is bounded by $g$.

Define $F_1,F_2,\ldots,F_{\ell-1}$ to be the homogenized polynomials for $f_1,f_2,\ldots,f_{\ell-1}$, i.e. $$F_i(p,q)=q^{\deg f_i} f_i(p/q)$$ and 
\[F= \prod_{i=1}^{\ell-1} F_i.\]

\begin{definition}
	Let $n=\frac{2g}{\ell-1}+2$. Define set
	\begin{align*}
	    P(n) =
		\{ &(D_1, \ldots, D_{\ell-1}) \in (\mathbb{F}_q[t])^{\ell-1} :\\
		&D_1,\ldots,D_{\ell-1} \text{ are pairwise coprime, monic, square-free, } \deg (D_1\cdots D_{\ell-1})\leq n \}.
	\end{align*}
\end{definition}

To give a lower bound on $|G(C_0,g)|$, it suffices to count the number of tuples\\ $(D_1,\ldots, D_{\ell-1}) \in P(n)$ such that there exists $(p,q) \in (\mathbf{F}_q[t])^2$ where
\begin{align}\label{Pickingpq}
	D_1 = F_1(p,q),
	D_2 = F_2(p,q),
	\ldots ,
	D_{\ell-1} = F_{\ell-1}(p,q).
	\end{align}

We will do this in two steps. 

First, we obtain a lower bound on the number of pairs $(p,q) \in (\mathbf{F}_q[t])^2$ with $$\max \{ \deg p, \deg q \} \le (g+\ell-1)/(g_0+\ell-1)$$ where the product
$F(p,q) = F_1(p,q)F_2(p,q)\ldots F_{\ell-1}(p,q) $ is square-free. 

Next, for a fixed tuple $(D_1,D_2,\ldots, D_{\ell-1}) \in P(n)$, we give an upper bound on the number of pairs $(p,q) \in (\mathbf{F}_q[t])^2$ such that Equations \ref{Pickingpq} are satisfied.

For the first step, we need the following result.

\begin{proposition}\label{poonen}\cite[Theorem 8.1]{Poonen}
	Let $P$ be a finite set of primes in $\mathbb{F}_q[t]$, $B$ be the localization of $\mathbb{F}_q[t]$ by inverting the primes in $P$, $K = \mathbb{F}_q(t)$, $f \in B[x_1, \ldots , x_m]$ be a polynomial that is square-free as an element of $K[x_1, \ldots , x_m]$ and for a choice of $x \in \mathbb{F}_q[t]^m$, we say that $f(x)$ is square-free in $B$ if the ideal $(f(x))$ is a product of distinct primes in $B$. For $b \in B$, define $|b| = |B/(b)|$ and for $b=(b_1,\ldots, b_n) \in B^n$, define $ |b| = \max{|b_i|}$. Let
	\begin{align*}
	S_f &:= \{ x \in \mathbb{F}_q[t]^m : f(x) \text{ is square-free in } B \},\\
	\mu_{S_f} &:= \lim\limits_{N \rightarrow \infty} \frac{|\{ b \in S_f : |b|<N \}|}{N^m}. 
	\end{align*}
	For each nonzero prime $ \pi$ of $B $, let $c_\pi$ be the number of $ x \in ( A / \pi^2 )^m $ that satisfy $f(x) = 0$ in $  A / \pi^2 $. 
	The limit  $\mu_{S_f}$ exists and is equal to $ \prod_{\pi}(1-c_\pi / |\pi|^{2m})$.
\end{proposition}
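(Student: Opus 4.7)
The plan is to prove the existence of the density and its product formula by the standard three–range sieve of Ekedahl and Poonen, adapted to $\mathbb{F}_q[t]$. The starting point is the local calculation: by the Chinese Remainder Theorem, for any finite set $T$ of primes of $B$, the density of $x\in\mathbb{F}_q[t]^m$ satisfying $\pi^2\nmid f(x)$ for all $\pi\in T$ equals $\prod_{\pi\in T}(1-c_\pi/|\pi|^{2m})$. Squarefreeness of $f$ as an element of $K[x_1,\ldots,x_m]$ implies that the singular locus $\Sigma\subset V(f)\subset\mathbb{A}^m_K$ has codimension at least two, and a Hensel–lift plus Lang--Weil computation then yields $c_\pi=O(|\pi|^{2m-2})$ for all but finitely many $\pi$, so the infinite product converges absolutely.

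Fix a cutoff $M$ and let $S_f^{\le M}$ denote the set of $x\in\mathbb{F}_q[t]^m$ such that $\pi^2\nmid f(x)$ for every prime $\pi$ of $B$ with $|\pi|\le M$. Applying CRT across the finite list of primes of norm at most $M$ gives
$$\lim_{N\to\infty}\frac{|\{x\in S_f^{\le M}:|x|<N\}|}{N^m}=\prod_{|\pi|\le M}\left(1-\frac{c_\pi}{|\pi|^{2m}}\right),$$
whose limit as $M\to\infty$ is the target product. It therefore suffices to show that the density of $x$ for which some prime $\pi$ with $|\pi|>M$ satisfies $\pi^2\mid f(x)$ tends to $0$ with $M$, uniformly in $N$.

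I would handle this tail by splitting into three regimes. In the \emph{medium} range $M<|\pi|\le N^{1/2}$, each residue class modulo $\pi^2$ meets $\{|x|<N\}$ in at most $(N/|\pi|^2)^m+O((N/|\pi|^2)^{m-1})$ tuples, so the total contribution is bounded by $\sum_\pi c_\pi(N/|\pi|^2)^m\ll N^m\sum_{|\pi|>M}|\pi|^{-2}=O(N^m/M)$. In the \emph{very large} range $|\pi|>(CN^{\deg f})^{1/2}$, the trivial bound $|f(x)|\le CN^{\deg f}$ forces $f(x)=0$, and the zero set of the nonzero polynomial $f$ contributes only $O(N^{m-1})$ points in $\{|x|<N\}$. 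The main obstacle is the intermediate \emph{large} range. Here the box count fails; instead the condition $\pi^2\mid f(x)$ forces the reduction $\bar x\in\mathbb{A}^m(\mathbb{F}_\pi)$ to lie in the singular subscheme $\Sigma$ modulo $\pi$, and since $\codim\Sigma\ge 2$ in $\mathbb{A}^m_K$, Lang--Weil gives $|\Sigma(\mathbb{F}_\pi)|=O(|\pi|^{m-2})$. Lifting this geometric constraint to an effective count of integral $x\in\{|x|<N\}$ whose reduction modulo some large $\pi$ lies in $\Sigma$ — essentially a function-field analogue of Ekedahl's geometric sieve, using Bertini-type hyperplane slicings to reduce the codimension-$2$ count to counts on curves where one controls lattice points by degree — is the real content of the argument. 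Summing the three ranges produces the desired density $\mu_{S_f}=\prod_\pi(1-c_\pi/|\pi|^{2m})$.
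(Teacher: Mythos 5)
The paper does not actually reprove this statement: its ``proof'' is the one-line observation that the proposition \emph{is} Theorem 8.1 of Poonen's paper, specialized to the box $\{x\in\mathbb{F}_q[t]^m:|x_i|<N\}$. You have instead undertaken to reprove Poonen's theorem, and while your skeleton (local densities via CRT plus a three-range tail estimate) is the right one, the argument breaks at exactly the point you flag as ``the real content.'' The claim that $\pi^2\mid f(x)$ forces the reduction $\bar{x}$ to lie in the singular subscheme $\Sigma$ of $V(f)$ modulo $\pi$ is false: already for $m=1$ and $f=x_1$, every $x_1\equiv 0\bmod \pi^2$ satisfies $\pi^2\mid f(x_1)$, yet $V(f)$ is smooth and $\Sigma=\emptyset$. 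In general a \emph{smooth} $\mathbb{F}_\pi$-point of $V(f)$ still has $|\pi|^{m-1}$ lifts modulo $\pi^2$ on which $f$ vanishes modulo $\pi^2$. This is harmless in the medium range, where box points equidistribute among residues modulo $\pi^2$, but in the large range $|\pi|^2>N$ each residue class modulo $\pi^2$ meets the box in at most one point, no equidistribution is available, and these smooth-point solutions can neither be discarded nor absorbed into a $\codim\ge 2$ Lang--Weil/Ekedahl count. So your reduction does not close the large range --- which is precisely the range that makes the squarefree-values problem hard (over $\ZZ$, Poonen needs the $abc$ conjecture there when $m=1$).

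What actually rescues the function-field case is the extra derivation $d/dt$. Setting $F(t)=f(x_1(t),\ldots,x_m(t))$, the condition $\pi^2\mid F$ implies $\pi\mid\gcd(F,dF/dt)$ with $dF/dt=f_t(x)+\sum_j f_{x_j}(x)\,x_j'$, and, viewing everything as polynomials in the coefficients of the $x_j$, squarefreeness of $f$ together with perfectness of $\FF_q$ guarantees a suitable derivative coprime to $f$; a coprime-values sieve then bounds the number of $x$ for which $F$ and $dF/dt$ share a prime factor of norm exceeding $M$. Relatedly, your opening claim that squarefreeness in $K[x_1,\ldots,x_m]$ forces $\codim\Sigma\ge 2$ is delicate over the imperfect field $K=\FF_q(t)$: for $f=x_1^p-t$ every geometric point of $V(f)\subset\mathbb{A}^1_K$ is non-smooth over $K$, and the bound $c_\pi=O(|\pi|^{2m-2})$ again requires bringing $\partial f/\partial t$ into play. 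For the purposes of this paper none of this needs to be redone; the proposition should simply be quoted from Poonen, as the authors do.
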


\begin{proof}
	This proposition directly follows from Theorem 8.1 of \cite{Poonen} by setting the ``box" to be \\$\{ x_1,\ldots,x_m \in \mathbb{F}_q[t]: |x_i|<N  \}.$
\end{proof}

	\begin{remark}\label{defineA}
	For our purpose, we will take $f=F(p,q)$ and it is crucial to have $\mu_{S_f}>0$. In order to ensure this, it suffices to check that none of the factors $(1-c_\pi / |\pi|^{2m})$ is zero. We take $m=2$ for our case.
	If for some prime $\pi$ in $\mathbb{F}_q(t)$, $1-c_\pi / |\pi|^{4}=0$, then this means $$F(u,v) \equiv 0 \mod \pi^2$$ for all $ (u,v) \in (\mathbb{F}_q[t])^2$. Thus $F(u,v) \equiv 0 \mod \pi$ for all $ (u,v) \in (\mathbb{F}_q[t]/\pi)^2$. Since the coefficients of $F(u,v)$ are units in $\mathbb{F}_q[t]$, it must be the case that $(F \mod \pi)$ is not the zero polynomial. 
	
	This implies $(F \mod \pi)$ can at most have $ d |\mathbb{F}_q[t]/\pi|$ solutions in $(\mathbb{F}_q[t]/\pi)^2$ where $d=\deg F$. So $$ d |\mathbb{F}_q[t]/\pi| \ge |\mathbb{F}_q[t]/\pi|^2$$ which is equivalent to $|\pi|\leq d$.
	
	Thus, we choose $P_f$ be the set of primes $\pi$ of $\mathbb{F}_q(t)$ such that $|\pi| \le d $. Let $B$ be the localization of $\mathbb{F}_q[t]$ by inverting all the primes in $P_f$. This implies $1-c_p / |p|^{4} \ne 0$ for any prime $p$ in $B$, and thus neither is the infinite product $\mu_{S_f}$.
	\end{remark}

\begin{theorem}\label{GeneralThm}
	Let $\ell$  be an odd prime number coprime to $q$. Let $C_0$ be an $\ell$-th order superelliptic curve of genus $g$ defined over $\mathbb{F}_q$ with affine equation $y^\ell=f_1(x)f_2^2(x)\ldots f_{\ell-1}^{\ell-1}$ where the $f_i$ are pairwise coprime, squarefree polynomials of degree $d_i$ each. Pick a model such that $\sum_{i=1}^{\ell-1} id_i  \equiv 0 \bmod \ell$ and assume $f_1(x)f_2^2(x)\ldots f_{\ell-1}^{\ell-1}$ is not a power of an irreducible polynomial. 
	Then for any $\epsilon>0$, there exist positive constants $B_\epsilon$ and $N_\epsilon$ such that the number of tuples of polynomials $(D_1, \dots ,D_{\ell-1}) \in P(n)$ satisfying the condition that curve $C :y^\ell = D_1(t) D_2(t)^2 \ldots D_{\ell-1}(t)^{\ell-1}$ admits a dominant map to $C_0$ is at least $B_\epsilon \cdot q^{\frac{2n}{d} - \epsilon}$ for $n>N_\epsilon$ where $d=d_1+\ldots + d_{\ell-1}$.
\end{theorem}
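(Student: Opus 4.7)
The approach, adapted from the hyperelliptic case in \cite{Wanlin}, is to use Lemma \ref{constructionC} to parametrize covers $C \to C_0$ by coprime pairs $(p,q) \in \FF_q[t]^2$, apply Proposition \ref{poonen} to count how often the resulting tuple lands in $P(n)$, and finally control the fiber of the parametrization. Setting $N := \lfloor n/d \rfloor$, for each coprime $(p,q)$ with $\max\{\deg p, \deg q\} \le N$, Lemma \ref{constructionC} yields a dominant map $C \to C_0$ where $C : y^\ell = \prod_{i=1}^{\ell-1} F_i(p,q)^i$ and $F_i(p,q) := q^{d_i} f_i(p/q)$. One checks $\sum_i \deg F_i(p,q) \le dN \le n$, and the hypothesis $\sum i d_i \equiv 0 \pmod \ell$ propagates to the new model, keeping $\infty$ unramified. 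After rescaling $(p,q) \mapsto (\lambda p, \lambda q)$ to normalize each $F_i(p,q)$ to be monic (a finite-choice issue), the tuple $(D_1, \ldots, D_{\ell-1}) = (F_1(p,q), \ldots, F_{\ell-1}(p,q))$ lies in $P(n)$ exactly when $F(p,q) := \prod_i F_i(p,q)$ is squarefree.

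Since the $f_i$ are pairwise coprime and squarefree, $F$ is squarefree in $K[p,q]$. Apply Proposition \ref{poonen} with $m=2$ and the finite inverted-prime set $P_F$ of Remark \ref{defineA}; by that remark $\mu_{S_F} > 0$, so for $N$ large at least $(\mu_{S_F} - o(1)) \cdot q^{2(N+1)}$ pairs $(p,q)$ with $|p|, |q| < q^{N+1}$ produce $F(p,q)$ squarefree in $B$. Eliminating pairs with $\pi^2 \mid F(p,q)$ for some $\pi \in P_F$ (a fixed finite set) costs only a constant-factor loss via routine inclusion-exclusion. Hence at least $\gg q^{2N+2}$ good pairs remain.

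Next I bound the fiber of the map $\Phi : (p,q) \mapsto (D_1, \ldots, D_{\ell-1})$. If two pairs give the same tuple, the rational functions $h = p/q$ and $h' = p'/q'$ have identical colored preimage divisors on $\PP$; equivalently they differ by a M\"obius transformation $\sigma \in \mathrm{PGL}_2(\ol{\FF_q})$ that preserves the weighted branch divisor $\sum_i i \cdot \mathrm{div}(f_i)$ of $C_0 \to \PP$. The hypothesis that $\prod f_i^i$ is not a power of a single irreducible polynomial forces this stabilizer to be finite, since the one-dimensional subgroups of $\mathrm{PGL}_2$ (namely $\mathbb{G}_a$, split and non-split tori) only stabilize divisors supported on at most two geometric points with equal weight. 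Together with the $\gcd(d_i)$-th root of unity ambiguity from the monic normalization, the fiber of $\Phi$ has size bounded by a constant $K = K(C_0)$. Combining the two estimates yields $\gg q^{2N+2} \gg q^{2n/d - \epsilon}$ distinct tuples for $n > N_\epsilon$, the $q^{\epsilon}$-slack absorbing the finite corrections from normalization and from the passage between $B$-squarefree and $\FF_q[t]$-squarefree.

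The main obstacle is the uniform fiber bound. Classifying which weighted divisors on $\PP$ have positive-dimensional $\mathrm{PGL}_2$-stabilizers, and verifying that the hypothesis of the theorem rules out exactly these cases, is the key algebraic content. Once this is done, the order $K$ of the stabilizer depends only on the $d_i$ and the geometric configuration of the $f_i$, not on $n$, so the fiber bound is uniform. The remaining counting and sieving steps follow from Proposition \ref{poonen} and routine density arguments.
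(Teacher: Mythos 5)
Your overall architecture --- parametrizing covers via Lemma \ref{constructionC}, counting squarefree specializations with Proposition \ref{poonen} and Remark \ref{defineA}, then dividing by a bound on the fiber of $(p,q)\mapsto(F_1(p,q),\dots,F_{\ell-1}(p,q))$ --- is the same as the paper's. The gap is in your fiber bound. You claim the fiber has size at most a constant $K(C_0)$, by arguing that two pairs with the same image yield rational functions $h,h'$ related by a M\"obius transformation stabilizing the weighted branch divisor, and that the stabilizer is finite. That reduction is false: the tuple $(D_1,\dots,D_{\ell-1})$ only records, for each $i$, the preimage under $h$ of the \emph{whole} zero set of $f_i$ (with multiplicity); it does not record the preimages of the individual roots of $f_i$. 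When some $f_i$ is reducible, those preimage points can be redistributed among the roots of that $f_i$, producing pairs $(p',q')$ with the same image whose associated $h'$ is not of the form $\sigma\circ h$ for any $\sigma\in\mathrm{PGL}_2$. The extreme case is when only one $F_i$ is non-constant, say $F_1$ with a splitting $F_1=F_{1,1}F_{1,2}$ (e.g.\ $f_1=x(x-1)$, so $F_1(p,q)=p(p-q)$): every factorization $D_1=AB$ gives a point $(p',q')=(A,A-B)$ of the fiber, so the fiber size is governed by the divisor function of $D_1$ --- it is $\ll N^{\epsilon}$ but emphatically not $O(1)$. Note this is not a corner case you may exclude: the curve $y^3=x^3-x$ used to prove Theorem \ref{trigonalThm} has exactly one non-constant, reducible $f_1$.

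The paper's proof treats precisely this dichotomy: if two of the $F_i$ are non-constant, B\'ezout applied to the coprime pair gives a constant fiber bound; if only one $F_i$ is non-constant, the hypothesis that $f$ is not a power of an irreducible guarantees a nontrivial coprime splitting $F_1=F_{1,1}F_{1,2}$, and one sums B\'ezout bounds over the $\ll N^{\epsilon}$ factorizations (and units) of $D_1$. This divisor-function loss is exactly the source of the $\epsilon$ in the exponent of $q^{2n/d-\epsilon}$; under your claimed constant fiber bound the $\epsilon$ would be unnecessary, which is itself a warning sign. Your argument is repaired by replacing the $\mathrm{PGL}_2$-stabilizer step with this factorization-plus-B\'ezout bound; the rest of your counting (the Poonen step, the normalization and the passage from $B$-squarefree to $\FF_q[t]$-squarefree) then goes through, since you already carry $q^{n\epsilon}$ of slack.
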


\begin{proof}
	Consider curves $C$ satisfying the condition in Lemma \ref{constructionC}. As was discussed earlier in this section, to give a lower bound on the number of such curves, it suffices to give a lower bound on the number of tuples $(D_1(t),D_2(t),\ldots, D_{\ell-1}(t)) \in P(n)$ such that there exists  $(u(t),v(t)) \in (\mathbf{F}_q[t])^2$ and
	\begin{equation}\label{EquationsDiFi}
	D_1(t) = F_1(u(t),v(t)),
	D_2(t) = F_2(u(t),v(t)),
	\ldots, 
	D_{\ell-1}(t) = F_{\ell-1}(u(t),v(t)).
	\end{equation} 
	
	Denote by $N=q^n$.
	By Proposition \ref{poonen}, for $(u,v) \in (\mathbf{F}_q[t])^2$ with $\{\deg u, \deg v \} \le n/d$, there are $ \gg \mu_{S_{F_1F_2\ldots F_{\ell-1}}} N^{2/d} $ such pairs that satisfy the condition that $$F_1(u(t),v(t))F_2(u(t),v(t))\ldots F_{\ell-1}(u(t),v(t))$$ is a squarefree element in $\mathbb{F}_q[t]$ and $\mu_{S_{F_1F_2\ldots F_{\ell-1}}}>0$ by taking $B$ as defined in Remark \ref{defineA}. 
	
	To conclude, for each fixed tuple $(D_1,\ldots, D_{\ell-1}) \in P(n)$, we need an upper bound on the number of pairs $(u(t),v(t))$ such that Equations \ref{EquationsDiFi} hold to correct doublecount. 
	
	First, we consider the case where there exist integers $i,j$ with $1\le i < j\le \ell -1$ such that $F_i$, $F_j$ are both non-constant. Without loss of generality, we assume $i=1$ and $j=2$. Thus, it suffices to bound the number of $(u,v)$ such that $D_i=F_i(u,v)$ for $i=1,2$. Geometrically, this can be viewed as the intersection locus of two affine curves in $\mathbb{A}^2_{\mathbb{F}_q(t)}$. As $F_1,F_2$ are coprime, this intersection is $0$ dimensional.	By Bezout's theorem, there are at most $\deg F_1 \deg F_2$ sets of solutions for the pair of equations.
	
	Now the case left is when there is only one nonzero $F_i$. Again, we assume $i=1$. Then by our assumption $F_1$ is reducible and take a factorization as $F_1=F_{1,1}F_{1,2}$. Then for a fixed $D_1$ with $\deg D_1 =n$, it has $\ll N^\epsilon$ factors where $N=q^n$. Then for each unit $a$ and a factorization $D_1=D_{1,1}D_{1,2}$, we want to bound the number of $(u,v)$ such that $aD_{1,1}=F_{1,1}(u,v)$ and $a^{-1}D_{1,2}=F_{1,2}(u,v)$. Again, by Bezout's theorem, there are at most $\deg D_{1,1}D_{1,2}$ sets of solutions. 
	
	In any case, for a fixed tuple $(D_1,\ldots, D_{\ell-1}) \in P(n)$, the number of pairs $(u,v)$ such that Equations \ref{EquationsDiFi} hold is bounded above by $qn^2N^\epsilon$ which concludes the proof.
\end{proof}

\begin{remark}
There is a distinction to be made between the collection of models of $\ell$-th order superelliptic curves and the collection of isomorphism classes of $\ell$-th order superelliptic curves. However, each such isomorphism class can contain at most $\ell(\ell-1)|\text{PGL}_2(\FF_q)|$ models. See \cite{Sankar}, Section 2.2 for details. Thus these two sets differ only by a multiplicative factor depending only on $\ell$ and $q$ which are fixed at the start. Since we are content to compute the order of magnitude of a lower bound on the number of curves whose zeta function vanishes at $s=\frac{1}{2}$, counting curves versus counting models does not affect our answer. So the statement of Theorem \ref{GeneralThm} is equivalent to its description in the introduction.
\end{remark}

\section{Theorem on Curves}\label{MainResults}
We finally apply the general results of section \ref{Counting}  to studying the vanishing of zeta functions of curves at $s=\frac{1}{2}$. We recall the notations used in the introduction
	\begin{align*}
	\mathcal A_\ell(n) &:= \{\text{Primitive Dirichlet characters } \chi_f \text{ of order }\ell \text{ with }   \deg f \leq n\},\\
    \mathcal B_\ell(n)&= \{\chi\in \mathcal A_\ell(n) \text{ such that } L(1/2,\chi)=0 \}.
	\end{align*}

\begin{theorem}\label{trigonalThm}
	Let $\mathbb{F}_q$ be a finite field of odd characteristic $p$ where $p \equiv 2 \bmod 3$, $q=p^e$, and $e \equiv 0 \bmod 4$. Then for any $\epsilon >0$, there exist positive constants $C_\epsilon$ and $N_\epsilon$, such that  $|\mathcal B_3(n)|\geq  C_\epsilon \cdot q^{\frac{2n}{3} - \epsilon} $ for any $n > N_\epsilon$. The constants $C_\epsilon$ and $N_\epsilon$ also depend on $q$.
\end{theorem}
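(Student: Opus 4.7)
The strategy is to apply Theorem \ref{GeneralThm} to a carefully chosen elliptic base curve $C_0$ and convert the model count it produces into a lower bound for $|\mathcal B_3(n)|$ via Lemma \ref{dirtocurves}. For vanishing precisely at $s=1/2$, I need $C_0$ to be a cubic superelliptic curve whose Jacobian admits the eigenvalue $+\sqrt{q}$ (and not merely $\pm\sqrt{q}$) under the geometric Frobenius.

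The base curve I would take is $C_0:\; y^3=x^3-x=x(x-1)(x+1)$. This is a smooth plane cubic carrying the $\ZZ/3\ZZ$-symmetry $(x,y)\mapsto(x,\zeta_3 y)$, so $j(C_0)=0$. The birational map $(x,y)\mapsto(x/y,\,1/y)$ followed by $y\mapsto-y$ identifies $C_0$ with the Weierstrass model $y^2=x^3+1$ over $\FF_q$. The congruence $p\equiv 2\bmod 3$ makes $p$ inert in $\End(C_0\otimes\overline{\FF}_p)=\ZZ[\zeta_3]$, so $C_0$ is supersingular with trace zero over $\FF_p$; equivalently the $\FF_p$-Frobenius $\pi$ satisfies $\pi^2=-p$. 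Hence the $\FF_{p^{4k}}$-Frobenius equals $\pi^{4k}=(-p)^{2k}=p^{2k}$, which is the scalar $\sqrt{q}$ whenever $q=p^{4k}$. This is precisely why the hypothesis $e\equiv 0\bmod 4$ is imposed: for $e\equiv 2\bmod 4$ the Frobenius would act as $-\sqrt{q}$, placing the corresponding zero of the zeta function at $s=1/2+i\pi/\log q$ rather than at $s=1/2$.

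With $C_0$ in hand, the hypotheses of Theorem \ref{GeneralThm} are verified in one line: writing $f_1(x)=x^3-x$ and $f_2(x)=1$, we have $d_1=3$, $d_2=0$, $d=3$, and $\sum id_i=3\equiv 0\bmod 3$, while $f_1 f_2^2=x(x-1)(x+1)$ is squarefree and patently not a power of an irreducible polynomial. Applying Theorem \ref{GeneralThm} with $\ell=3$ produces, for every $\epsilon>0$, at least $B_\epsilon\cdot q^{2n/3-\epsilon}$ tuples $(D_1,D_2)\in P(n)$ such that the curve $y^3=D_1 D_2^2$ admits a dominant map to $C_0$. By Lemma \ref{C0toC}, each of these curves then has $\sqrt{q}$ among its Frobenius eigenvalues, so its zeta function vanishes at $s=1/2$ and it lies in $\mathcal T_3(n)$. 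Passing from the model count to an isomorphism-class count loses only the multiplicative factor $\ell(\ell-1)|\operatorname{PGL}_2(\FF_q)|$ noted in the remark following Theorem \ref{GeneralThm}, which is acceptable since $q$ is fixed. Finally Lemma \ref{dirtocurves} yields $|\mathcal B_3(n)|\geq|\mathcal T_3(n)|\geq C_\epsilon\cdot q^{2n/3-\epsilon}$ for all $n$ sufficiently large.

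The only nontrivial step outside the framework already developed is the construction of $C_0$: putting a supersingular $j=0$ elliptic curve into the superelliptic normal form $y^3=f(x)$ with $\deg f=3$ and $\sum id_i\equiv 0\bmod\ell$, while simultaneously pinning down the \emph{sign} of the $\FF_q$-Frobenius eigenvalue. Both tasks are explicit and short, but pinpointing the sign of $\sqrt{q}$ is what forces the divisibility $4\mid e$ and is therefore the crux of the argument. Everything else reduces to direct application of Theorem \ref{GeneralThm} and Lemmas \ref{C0toC} and \ref{dirtocurves}.
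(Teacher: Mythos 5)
Your proposal is correct and follows essentially the same route as the paper: take $E: y^2=x^3+1$ (equivalently $y^3=x^3-x$), use $p\equiv 2\bmod 3$ to get supersingularity with eigenvalues $\pm i\sqrt{p}$ and $4\mid e$ to force the eigenvalue $+\sqrt{q}$ over $\FF_q$, then apply Theorem \ref{GeneralThm} together with Lemmas \ref{C0toC} and \ref{dirtocurves}. The only quibble is that your explicit birational map $(x,y)\mapsto(x/y,1/y)$ does not quite land on $y^2=x^3+1$, but this is immaterial since $y^3=x^3-x$ has CM by $\ZZ[\zeta_3]$ and hence trace zero over $\FF_p$ regardless of which twist it is.
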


\begin{proof}
	Let $E$ be the elliptic curve over $\mathbb{F}_p$ given by Weierstrass equation $y^2=x^3+1$. Since $p \equiv 2 \bmod 3$, $E$ is supersingular with Frobenius eigenvalues $i\sqrt{p}$ and $-i\sqrt{p}$. Thus, consider $E \times_{\mathbb{F}_p}  \mathbb{F}_q$, it has Frobenius eigenvalue $\sqrt{q}$ with multiplicity $2$.
	
	Note that $E$ admits a degree $3$ cyclic cover of $\mathbb{P}^1$ by $(x,y) \mapsto y$. Explicitly, by a change of variable, $E$ has defining equation $y^3 = x^3-x$ and is therefore an $\ell$-th order superelliptic curve with $\ell=3$.
	
	Combining Lemma \ref{C0toC} and Theorem \ref{GeneralThm} by taking $C_0$ to be $E$, we obtain that there are $\gg C_\epsilon q^{\frac{2n}{3}-\epsilon}$ models of superelliptic curves. By Lemma \ref{dirtocurves}, this is also a lower bound on $|\mathcal B_3(n)|$.
\end{proof}
More generally, for an arbitrary odd primes $\ell$, we have the following result for cyclic $\ell$ covers over $\FF_{p^d}$ for some finite fields of odd characteristic:

\begin{theorem}\label{ellThm}
	Let $\ell>2$ be a prime and $p\equiv -1 \bmod \ell$ a prime number. Then, there exists an integer $d>0$  (depending on $\ell$ and $p$) such that when we consider curves over the field $\FF_{q^d}$, there exists a positive constant $N$ such that  $|\mathcal B_\ell(n)|\geq B\cdot q^{\frac{2n}{3}-\epsilon}$ whenever $n\geq N$.
\end{theorem}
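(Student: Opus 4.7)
The plan is to run the proof of Theorem~\ref{trigonalThm} in a more abstract fashion: start from a supersingular $\ell$-th order superelliptic base curve over $\FF_p$ and extend scalars just enough so that one Frobenius eigenvalue becomes $\sqrt{q}$. First I would invoke an existence result for a supersingular $\ell$-th order superelliptic curve $C_0/\FF_p$; when $p \equiv -1 \bmod \ell$, such curves are available from the recent work on supersingular superelliptic curves alluded to in the abstract (families of Fermat and Hermitian type are the standard source). The technical step in this part is to pick from among those curves a concrete defining equation that meets the hypotheses of Theorem~\ref{GeneralThm}: the factors $f_1,\dots,f_{\ell-1}$ must be pairwise coprime and squarefree, with $\sum_{i=1}^{\ell-1} i\,d_i \equiv 0 \bmod \ell$, and $f = \prod f_i^i$ must not be a power of a single irreducible. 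A mild birational twist $x\mapsto x+a$ together with scaling $y$ by a constant typically separates the branch locus enough to enforce this normalization.

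Once $C_0$ is chosen, supersingularity of its Jacobian forces every Frobenius eigenvalue $\pi$ to be of the form $\sqrt{p}\cdot\zeta$ for some root of unity $\zeta$. Fix one such eigenvalue $\pi_1 = \sqrt{p}\cdot\zeta_1$ and let $d$ be any positive even integer for which $\zeta_1^d = 1$. Then, after base change to $\FF_{p^d}$, the Jacobian of $C_0$ acquires $p^{d/2} = \sqrt{p^d}$ as a Frobenius eigenvalue. Taking $d$ even also guarantees $p^d \equiv 1 \bmod \ell$ (since $p \equiv -1 \bmod \ell$), which is precisely the congruence needed so that Kummer theory applies over $\FF_q$ with $q := p^d$, and so that Lemma~\ref{dirtocurves} is available.

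With this $C_0$ regarded as a curve over $\FF_q$, Lemma~\ref{C0toC} propagates the eigenvalue $\sqrt{q}$ to every $\ell$-th order superelliptic curve admitting a dominant map to $C_0$. Theorem~\ref{GeneralThm} then yields a lower bound of the form $B \cdot q^{2n/d_0 - \epsilon}$ on the number of such curves with controlled conductor (where $d_0 = \sum d_i$ is a fixed invariant of the chosen $C_0$), and Lemma~\ref{dirtocurves} transports this lower bound to $|\mathcal B_\ell(n)|$. The dependence of the constants on $\ell$, $p$, and the chosen $d$ enters precisely through the construction of $C_0$ and through $d_0$; in the statement of the theorem this is absorbed into the constant $B$ and the threshold $N$.

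The main obstacle I expect is the very first step: producing, for every odd prime $\ell$ and every prime $p \equiv -1\bmod \ell$, a supersingular $\ell$-th order superelliptic curve whose defining equation fits the rigid normalization required by Theorem~\ref{GeneralThm}. Existence of \emph{some} supersingular member of the family is covered by known results on Newton polygons of cyclic covers, but extracting an explicit model in which $f$ splits into more than one irreducible factor and the ramification data satisfies $\sum i\,d_i \equiv 0 \bmod \ell$ may need an auxiliary construction or a case split. Once that model is in hand, the remainder of the proof is a direct transcription of the argument for Theorem~\ref{trigonalThm}.
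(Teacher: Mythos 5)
Your proposal is correct and takes essentially the same approach as the paper: a supersingular $\ell$-th order superelliptic curve over $\FF_p$, a base change $\FF_{p^d}$ chosen so that $\sqrt{p^d}$ becomes a Frobenius eigenvalue and $p^d\equiv 1\bmod\ell$, then Lemma \ref{C0toC}, Theorem \ref{GeneralThm}, and Lemma \ref{dirtocurves}. The obstacle you flag at the end is resolved in the paper by citing the explicit curve $y^\ell=x(x-1)(x-2)^{\ell-2}$ (Remark 3.4 of [LMPT1]), which is supersingular for $p\equiv -1\bmod\ell$ and already meets the normalization, since $\sum_i i\,d_i = 1\cdot 2+(\ell-2)\cdot 1=\ell\equiv 0\bmod\ell$ and $f$ is visibly not a power of an irreducible.
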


\begin{proof}
By remark 3.4 of \cite{LMPT1}, curve $C_0$ given by affine equation $y^\ell = x(x-1)(x-2)^{\ell-2}$ over $\mathbb{F}_p$ is supersingular and is a cyclic $\ell$-cover of $\mathbb{P}^1$ after base change to a field $\mathbb{F}_q$ where $q \equiv 1 \bmod \ell$. Since the Frobenius eigenvalues of $C_0$ considered as a curve over $\mathbb F_{p}$ are all of the form $\sqrt{p}\zeta$ where $\zeta$ is a root of unity, after a base change to a finite extension $\mathbb F_{p^d}$,  $C\times \mathbb F_{p^d}$ acquires $\sqrt{p^d}$ as an eigenvalue. By construction, it has the form $y^l=f(x)$ where $f=x(x-1)(x-2)^{\ell-2}$ is not a power of an irreducible polynomial. Applying Theorem \ref{GeneralThm} combined with lemma \ref{dirtocurves}, our result follows.  
\end{proof}

\bibliographystyle{amsplain}
\bibliography{characterbib}

\end{document}